\newtheorem{thm}{Theorem}[section]
\newtheorem{claim}[thm]{Claim}
\newtheorem{lem}[thm]{Lemma}
\providecommand{\Cliff}[2][\comment]{{\ensuremath{%
\mathcal{C}\kern-0.13em\ell(#1,#2)}}}%
\providecommand{\comment}[1]{}
\providecommand{\norm}[2][\relax]{\left\|#2\right\|\ifx#1\relax\else_{#1}\fi}
\providecommand{\modulus}[2][\relax]{\left| #2 \right|\ifx#1\relax\else_{#1}\fi}
\providecommand{\cycle}[3][]{{#1 C^{#2}_{#3}}}
\providecommand{\Space}[3][]{\ensuremath{\mathbb{#2}^{#3}_{#1}{}}}
  \providecommand{\FSpace}[3][]{\ensuremath{\ifx#2l \ell_{#3}^{#1}{}\else
  #2_{#3}^{#1}{}\fi}} 
\providecommand{\scalar}[3][\relax]{\left\langle #2,#3 
        \right\rangle\ifx#1\relax\else_{#1}\fi}
  \providecommand{\Zbl}[1]{Zbl\href{http://www.emis.de:80/cgi-bin/zmen/ZMATH/en/zmathf.html?first=1&maxdocs=3&type=html&an=#1&format=complete}{#1}}
\providecommand{\myeprint}[2]{E-print: \href{#1}{\texttt{#2}}}
\providecommand{\rmi}{\mathrm{i}}
\providecommand{\tr}{\mathop{tr}}
\begin{document}
\title[Continued Fractions, M\"obius Transformations and Cycles]
{Remark on Continued Fractions,\\
  M\"obius Transformations and Cycles}

\author[Vladimir V. Kisil]%
{\href{http://www.maths.leeds.ac.uk/~kisilv/}{Vladimir V. Kisil}}
\thanks{On  leave from Odessa University.}

\address{%            
%Institute of Mathematics\\
%Economics and Mechanics\\
%Odessa State University\\
%ul. Petra Velikogo, 2\\
%Odessa-57, 270057, UKRAINE
School of Mathematics\\
University of Leeds\\
Leeds LS2\,9JT\\
UK
}

\email{\href{mailto:kisilv@maths.leeds.ac.uk}{kisilv@maths.leeds.ac.uk}}

\urladdr{\url{http://www.maths.leeds.ac.uk/~kisilv/}}

\date{\today}

\begin{abstract}
  We review interrelations between continued fractions, M\"obius
  transformations and representations of cycles by \(2\times 2\)
  matrices. This leads us to several descriptions of continued
  fractions through chains of orthogonal or touching horocycles. One
  of these descriptions was proposed in a recent paper by A.~Beardon
  and I.~Short. The approach is extended to several dimensions in a
  way which is compatible to the early propositions of A.~Beardon based
  on Clifford algebras.
\end{abstract}
\keywords{continued fractions, M\"obius transformations, cycles,
  Clifford algebra}
\subjclass[2010]{Primary: 30B70; Secondary: 51M05}
\maketitle
\tableofcontents

% A=realsymbol("A")
% B=realsymbol("B")
%Ct=cycle2D(2*pow(B,2),[2*A*B,1],2*pow(A,2),e)
% print Ct.center()[1] #-1/2*B**(-2)*sigma
% print Ct.radius_sq() #-1/4*B**(-4)*sigma
% print Ct.det() #-si
%  print Ct.sl2_similarity([b,1,1,0]).string()
% (2*A^2, [2*A*B+2*A^2*b,-1],2*A^2*b^2+2*B^2+4*A*B*b)

\section{Introduction}
\label{sec:introduction}

Continued fractions remain an important and attractive topic of
current research \citelist{\cite{Khrushchev08a}
  \cite{BorweinPoortenShallitZudilin14a} \cite{Karpenkov2013a}
  \cite{Kirillov06}*{\S~E.3}}. A fruitful and geometrically appealing
method considers a continued fraction as an (infinite) product of
linear-fractional transformations from the M\"obius group.  see
Sec.~\ref{sec:continued-fractions} of this paper for an overview,
papers~\citelist{\cite{PaydonWall42a} \cite{Schwerdtfeger45a}
  \cite{PiranianThron57a} \cite{Beardon04b}
  \cite{Schwerdtfeger79a}*{Ex.~10.2}} and in particular
\cite{Beardon01a} contain further references and historical
notes. Partial products of linear-fractional maps form a sequence in
the Moebius group, the corresponding sequence of transformations can
be viewed as a discrete dynamical
system~\cite{Beardon01a,MageeOhWinter14a}. Many important questions on
continued fractions, e.g. their convergence, can be stated in terms of
asymptotic behaviour of the associated dynamical system. Geometrical
generalisations of continued fractions to many dimensions were
introduced recently as well~\citelist{\cite{Beardon03a}
  \cite{Karpenkov2013a}}.

Any consideration of the M\"obius group introduces cycles---the
M\"obius invariant set of all circles and straight lines. Furthermore, 
an efficient treatment cycles and M\"obius
transformations is realised through certain  \(2\times 2\) matrices,
which we will review in Sec.~\ref{sec:mobi-transf-cycl}, see
also~\citelist{\cite{Schwerdtfeger79a} \cite{Cnops02a}*{\S~4.1}
  \cite{FillmoreSpringer90a} \cite{Kirillov06}*{\S~4.2}
  \cite{Kisil06a} \cite{Kisil12a}}. Linking the above relations we may
propose the main thesis of the present note:
\begin{claim}[Continued fractions and cycles]
  Properties of continued fractions may be illustrated and
  demonstrated using related cycles, in particular, in the form of
  respective \(2\times 2\) matrices. 
\end{claim}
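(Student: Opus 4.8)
The plan is to read this thesis not as a single deductive assertion but as a \emph{dictionary} to be constructed and then exercised on concrete geometry; the demonstration consists of that dictionary together with the verification that the analytic data of a continued fraction transports faithfully into the geometric data of a chain of cycles. First I would recall, from Section~\ref{sec:continued-fractions}, that a continued fraction is the sequence of partial compositions $S_n = s_1\circ s_2\circ\cdots\circ s_n$ of elementary M\"obius maps $s_k$, so that each convergent is $S_n(\infty)$ (equivalently $S_n(0)$). Since every $s_k$ is encoded by a $2\times 2$ matrix acting as a linear-fractional transformation, the sequence $(S_n)$ is a trajectory in the M\"obius group, and all questions about value, convergence and tail behaviour become questions about this trajectory.

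Second, I would invoke the Fillmore--Springer--Cnops representation reviewed in Section~\ref{sec:mobi-transf-cycl}: a cycle (circle or straight line) is represented by a $2\times 2$ matrix $C$, and a M\"obius transformation with matrix $g$ sends $C$ to the conjugate $gCg^{-1}$. The decisive point is that the \emph{same} matrices that generate the continued-fraction trajectory also act on cycles by similarity; hence the partial products may be applied not to a single point but to a fixed initial cycle $C_0$, producing a chain $C_n = S_n\, C_0\, S_n^{-1}$. Because $\det g = 1$ for $g\in\SL$, conjugation preserves the bilinear trace-type pairing on cycle matrices that governs tangency and orthogonality, so any incidence relation imposed at step $0$ propagates along the whole chain.

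The substantive third step is to choose the initial configuration so that the \emph{geometry} of the chain records the fraction transparently. Following the Beardon--Short picture I would take $C_0$ together with an auxiliary line so that consecutive cycles are either tangent (touching horocycles) or mutually orthogonal, the relevant condition being the vanishing of $\tr$ of an appropriate product of the two cycle matrices. Convergence of the continued fraction then manifests as the radii of the cycles $C_n$ shrinking to zero while their point of contact closes in on the limiting value---precisely the behaviour visible in the computed chains that collapse onto $\pi$ and onto $e$. Demonstrating the thesis in the plane thus reduces to exhibiting this one invariant configuration and reading off its geometry.

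Finally, the several-dimensional extension replaces real or complex entries by Clifford-algebra-valued $2\times 2$ matrices, so that the conjugation formula $C_n = S_n C_0 S_n^{-1}$ and the tangency/orthogonality pairing retain their form. The main obstacle is not any single calculation but securing genuine \emph{compatibility}: one must verify that the Clifford cycle matrices and the Clifford continued-fraction matrices inhabit the same algebra and interact through one consistent product, so that the higher-dimensional dictionary specialises back to Beardon's earlier Clifford formulation rather than to a merely parallel construction. Once that compatibility is pinned down, the thesis is established in each dimension by producing the cycle chain whose geometry reproduces the known behaviour of the fraction.
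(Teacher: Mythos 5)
Your proposal is correct and follows essentially the same route as the paper: continued fractions as products of \(2\times 2\) matrices, the matrix representation of cycles with conjugation covariance, chains of tangent or orthogonal horocycles whose radii record the convergence of the partial quotients, and the Clifford-algebra extension reconciling the approaches of \cite{Beardon03a} and \cite{BeardonShort14a}. The only difference of emphasis is that the paper pins down the admissible initial cycles by asking which cycles have images depending on only one column of the matrix (Lemmas~\ref{le:first-col}--\ref{le:blue} and their multidimensional counterparts), and thereby exhibits all three natural arrangements, whereas you take the Beardon--Short configuration as a given starting point.
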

One may expect that such an observation has been made a while ago,
e.g. in the book~\cite{Schwerdtfeger79a}, where both topics were
studied. However, this seems did not happened for some reasons. It is
only the recent paper~\cite{BeardonShort14a}, which pioneered a connection
between continued fractions and cycles. We hope that the explicit
statement of the claim will stimulate its further fruitful
implementations. In particular, Sec.~\ref{sec:cont-fract-cycl} reveals
all three possible cycle arrangements similar to one used
in~\cite{BeardonShort14a}. Secs.~\ref{sec:multi-dimens-vari}--\ref{sec:mult-cont-fract}
shows that relations between continued fractions and cycles can be
used in the multidimensional case as well.

As an illustration, we draw on Fig.~\ref{fig:cont-fract-e} chains of
tangent horocycles (circles tangent to the real line,
see~\cite{BeardonShort14a} and Sec.~\ref{sec:cont-fract-cycl}) for two
classical simple continued fractions:
\begin{displaymath}
  e=2+
  \cfrac{1}{1+\cfrac{1}{2+\cfrac{1}{1+\cfrac{1}{1+\ldots}}}}\,
  ,\qquad
  \pi=3+
  \cfrac{1}{7+\cfrac{1}{15+\cfrac{1}{1+\cfrac{1}{292+\ldots}}}}\, .
\end{displaymath}
\begin{figure}[htbp]
  \centering
  \includegraphics[scale=.6]{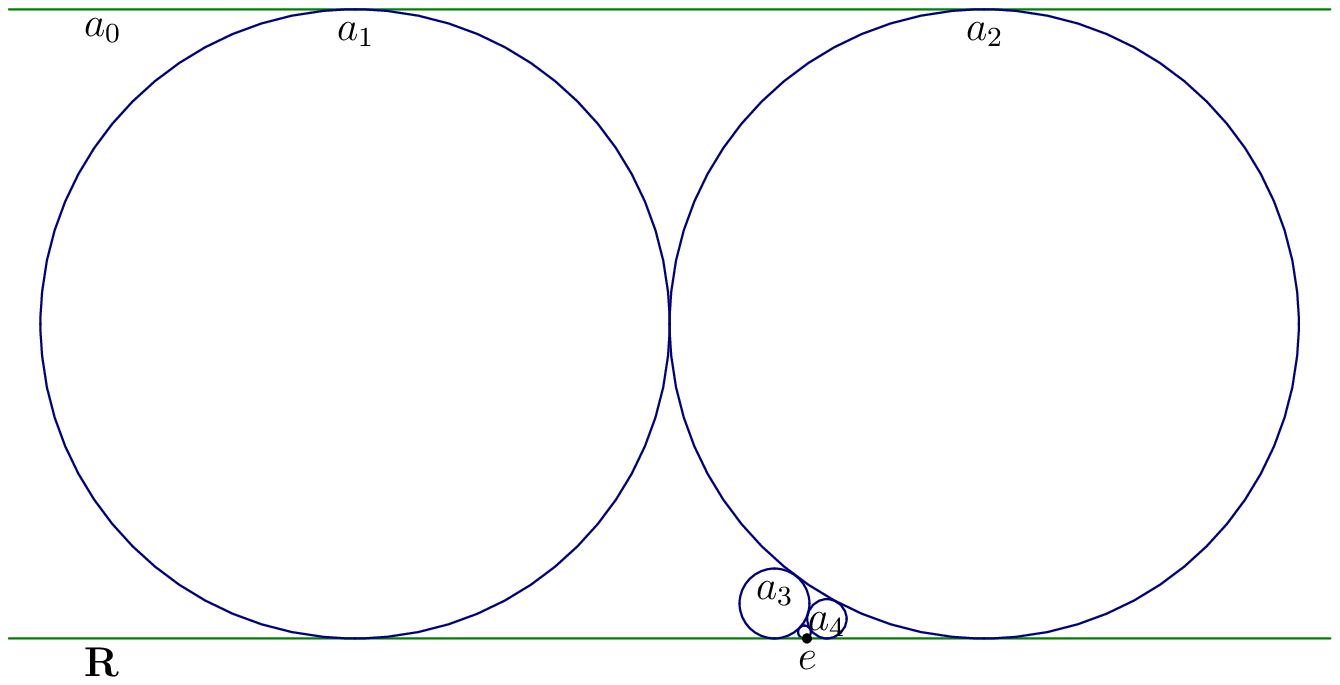}\hfill
  \includegraphics[scale=.6]{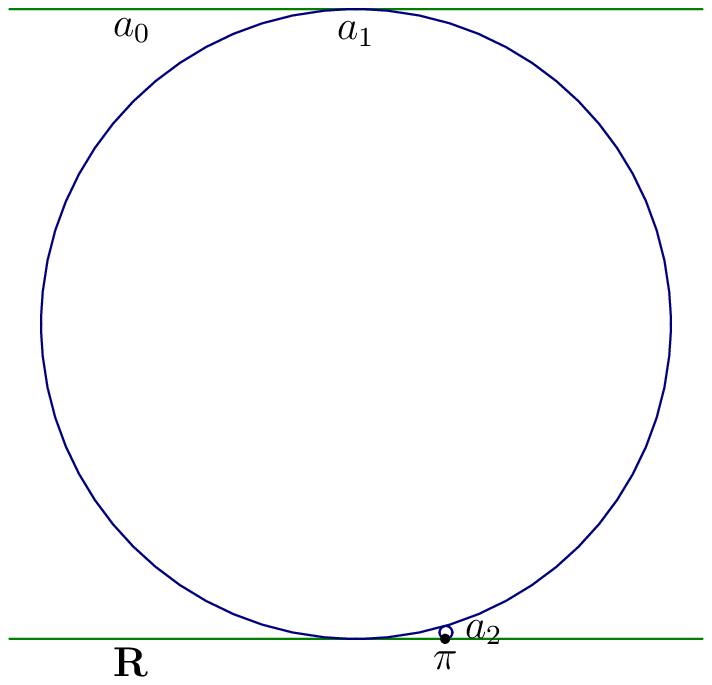}
  \caption{Continued fractions for $e$ and $\pi$ visualised. The
    convergence rate for $\pi$ is pictorially faster.}
  \label{fig:cont-fract-e}
\end{figure}
One can immediately see, that the convergence for \(\pi\) is much
faster: already the third horocycle is too small to be visible even if
it is drawn. This is related to larger coefficients in the continued
fraction for \(\pi\). 

Paper~\cite{BeardonShort14a} also presents examples of proofs based on
chains of horocycles.  This intriguing construction was introduced
in~\cite{BeardonShort14a} \emph{ad hoc}. Guided by the above claim we
reveal sources of this and other similar arrangements of horocycles.
% It is natural, that we
% want to reveal its origin. This note describes the connection of
% chains of horocycles with the M\"obius-invariant geometry of
% cycles. Our approach allows to present all relevant arrangements of
% horocycles representing a continued fractions. 
Also, we can produce multi-dimensional variant of the framework.

\section{Continued Fractions}
\label{sec:continued-fractions}

We use the following compact notation for a continued fraction:
\begin{equation}
\label{eq:cont-frac}
K(a_n|b_n)=%b_0+
\cfrac{a_1}{b_1+\cfrac{a_2}{b_2+\cfrac{a_3}{b_3+\ldots}}}=
%b_0+
\frac{a_1}{b_1}\genfrac{}{}{0pt}{}{}{+}\frac{a_2}{b_2}\genfrac{}{}{0pt}{}{}{+}
\frac{a_3}{b_3}\genfrac{}{}{0pt}{}{}{+\ldots}.
\end{equation}
Without loss of generality we can assume \(a_j\neq 0\) for all
\(j\). %For a convenience we also agree that \(a_0=1\).
The important particular case of \emph{simple} continued fractions,
with \(a_n=1\) for all \(n\), is denoted by \(K(b_n)=K(1|b_n)\). Any
continued fraction can be transformed to an equivalent simple one.

It is easy to see, that continued fractions are
related to the following linear-fractional
(M\"obius) transformation, cf.~\cites{PaydonWall42a,Schwerdtfeger45a,PiranianThron57a,Beardon04b}:
\begin{equation}
  \label{eq:cf-moebius-maps}
  S_n=s_1\circ s_2\circ \ldots\circ s_n, \qquad \text{where} \quad
  s_{j}(z)= \frac{a_j}{b_j+z}.
\end{equation}
These M\"obius transformation are considered as bijective maps of the
Riemann sphere \(\dot{\Space{C}{}}=\Space{C}{}\cup\{\infty\}\) onto itself.
If we associate  the matrix \(
\begin{pmatrix}
  a&b\\ c&d 
\end{pmatrix}\) to a liner-fractional transformation \(z\mapsto
\frac{a z+b}{c z+d}\), then the composition of two
such transformations corresponds to multiplication of the
respective matrices. Thus, relation~\eqref{eq:cf-moebius-maps}
has the matrix form:
% \begin{equation}
%   \label{eq:cf-part-frac-matrix}
%   \begin{pmatrix}
%     Q_n\\
%     P_n
%   \end{pmatrix}
%   =
%   \begin{pmatrix}
%     0&1\\
%     1&b_0
%   \end{pmatrix}
%   \begin{pmatrix}
%     0&a_1\\
%     1&b_1
%   \end{pmatrix}
%   \begin{pmatrix}
%     0&a_2\\
%     1&b_2
%   \end{pmatrix}\cdots
%   \begin{pmatrix}
%     0&a_n\\
%     1&b_n
%   \end{pmatrix}
%   \begin{pmatrix}
%     0\\
%     1
%   \end{pmatrix}.
% \end{equation}
% We can modify the identity to include only  \(2\times 2\)-matrices:
\begin{equation}
  \label{eq:cf-part-frac-matrix}
  \begin{pmatrix}
    P_{n-1}&P_n\\
    Q_{n-1}&Q_n
  \end{pmatrix}
  % &=
  % \begin{pmatrix}
  %   0&1\\
  %   1&b_0
  % \end{pmatrix}
  % \begin{pmatrix}
  %   0&a_1\\
  %   1&b_1
  % \end{pmatrix}
  % \begin{pmatrix}
  %   0&a_2\\
  %   1&b_2
  % \end{pmatrix}\cdots
  % \begin{pmatrix}
  %   0&a_n\\
  %   1&b_n
  % \end{pmatrix}
  % \begin{pmatrix}
  %   1&0\\
  %   0&1
  % \end{pmatrix}\\
  =
%  \begin{pmatrix}
%    0&1\\
%    1&b_0
%  \end{pmatrix}
  \begin{pmatrix}
    0&a_1\\
    1&b_1
  \end{pmatrix}
  \begin{pmatrix}
    0&a_2\\
    1&b_2
  \end{pmatrix}\cdots
  \begin{pmatrix}
    0&a_n\\
    1&b_n
  \end{pmatrix} .
\end{equation}
The last identity can be fold into the recursive formula: 
\begin{equation}
  \label{eq:cf-recurr-matrix}
  \begin{pmatrix}
    P_{n-1}&P_{n}\\    
    Q_{n-1}&Q_{n}
  \end{pmatrix}
  =
  \begin{pmatrix}
    P_{n-2}&P_{n-1}\\    
    Q_{n-2}&Q_{n-1}
  \end{pmatrix}
  \begin{pmatrix}
    0&a_n\\
    1&b_n
  \end{pmatrix}.
\end{equation}
This is equivalent to the main recurrence relation:
\begin{equation}
  \label{eq:cf-recurr}
  \begin{array}{c}
    P_n=b_nP_{n-1}+a_nP_{n-2}\\
    Q_n=b_nQ_{n-1}+a_nQ_{n-2}
  \end{array}, 
\quad n=1,2,3,\ldots,
\quad\text{with }
\begin{array}{cc}
{P_1}={a_1},&  {P_{0}}=0,\\
{Q_1}={b_1}, & Q_{0}=1.
\end{array}
\end{equation}

The meaning of entries \(P_n\) and \(Q_n\) from the
matrix~\eqref{eq:cf-part-frac-matrix} is revealed as follows. M\"obius
transformation~\eqref{eq:cf-moebius-maps}--\eqref{eq:cf-part-frac-matrix}
maps \(0\) and \(\infty\) to
\begin{equation}
  \label{eq:part-frac-moebius-map}
  \frac{P_n}{Q_n}=S_n(0), \qquad     \frac{P_{n-1}}{Q_{n-1}}=S_n(\infty).
\end{equation}
It is easy to see that \(S_n(0)\) is the \emph{partial quotient} of~\eqref{eq:cont-frac}:
\begin{equation}
  \label{eq:partial-quotient}
  \frac{P_n}{Q_n}= %b_0+
  \frac{a_1}{b_1}\genfrac{}{}{0pt}{}{}{+}\frac{a_2}{b_2}\genfrac{}{}{0pt}{}{}{+\ldots+}
\frac{a_n}{b_n}\,.
\end{equation}
Properties of the sequence of partial quotients
\(\left\{\frac{P_n}{Q_n}\right\}\) in terms of sequences \(\{a_n\}\)
and \(\{b_n\}\) are the core of the continued fraction
theory. Equation~\eqref{eq:part-frac-moebius-map} links partial
quotients with the M\"obius map~\eqref{eq:cf-moebius-maps}.  Circles
form an invariant family under M\"obius transformations, thus their
appearance for continued fractions is natural. Surprisingly, this
happened only recently in~\cite{BeardonShort14a}.

\section{M\"obius Transformations and Cycles}
\label{sec:mobi-transf-cycl}

If \(M=\begin{pmatrix}
  a&b\\c&d
\end{pmatrix}\) is a matrix with real entries then for the purpose of
the associated M\"obius transformations \(M: z\mapsto
\frac{az+b}{cz+d}\) we may assume that \(\det M=\pm 1\). The
collection of all such matrices form a group.  M\"obius maps commute
with the complex conjugation \(z\mapsto \bar{z}\). If \(\det M>0\)
then both the upper and the lower half-planes are preserved; if \(\det
M <0\) then the two half-planes are swapped. Thus, we can treat \(M\)
as the map of equivalence classes \(z\sim\bar{z}\), which are labelled
by respective points of the closed upper half-plane. Under this
identification we consider any map produced by \(M\) with \(\det M
=\pm 1\) as the map of the closed upper-half plane to itself.

The characteristic property of M\"obius maps is that circles and lines
are transformed to circles and lines. We use the word \emph{cycles}
for elements of this M\"obius-invariant
family~\cites{Yaglom79,Kisil12a,Kisil06a}. We abbreviate a cycle given
by the equation
\begin{equation}
  \label{eq:cycle-defn}
  k(u^2+v^2)-2lv-2nu+m=0
\end{equation}
to the point \((k,l,n,m)\) of the three dimensional projective space
\(P\Space{R}{3}\). The equivalence relation \(z\sim \bar{z}\) is
lifted  to the equivalence relation 
\begin{equation}
  \label{eq:cycle-equiv}
  (k,l,n,m)\sim(k,l,-n,m)
\end{equation}
in the space of cycles, which again is compatible with the M\"obius
transformations acting on cycles.

The most efficient connection between cycles and M\"obius
transformations is realised through the construction, which may be
traced back to~\cite{Schwerdtfeger79a} and was subsequently
rediscovered by various authors~\citelist{\cite{Cnops02a}*{\S~4.1}
  \cite{FillmoreSpringer90a} \cite{Kirillov06}*{\S~4.2}}, see
also~\cites{Kisil06a,Kisil12a}. The construction associates a cycle
\((k,l,n,m)\) with the \(2\times 2\) matrix \(\cycle{}{}=\begin{pmatrix}
  l+\rmi  n&-m\\
  k&-l+\rmi n
\end{pmatrix}\), see discussion in~\cite{Kisil12a}*{\S~4.4} for a
justification. This identification is M\"obius covariant: the M\"obius
transformation defined by \(M=\begin{pmatrix} a&b\\c&d
\end{pmatrix}\) maps a cycle with matrix \(\cycle{}{}\) to the cycle with
matrix \(M\cycle{}{}M^{-1}\). Therefore, any M\"obius-invariant relation between cycles
can be expressed in terms of corresponding matrices. The central role
is played by the M\"obius-invariant inner pro\-duct~\cite{Kisil12a}*{\S~5.3}: 
\begin{equation}
  \label{eq:cycle-iner-pr}
  \scalar{\cycle{}{}}{\cycle[      \tilde]{}{}}=\Re \tr(\cycle{}{}\overline{\cycle[\tilde]{}{}}),
\end{equation}
which is a cousin of the product used in GNS construction of
\(C^*\)-algebras. Notably, the relation:
\begin{equation}
  \label{eq:first-ortho-gen}
  \scalar{\cycle{}{}}{\cycle[\tilde]{}{}}=0\quad \text{ or } \quad  k\tilde{m}+m\tilde{k}-2n\tilde{n}-2l\tilde{l}=0,
\end{equation}
describes two cycles \(\cycle{}{}=(k,l,m,n)\) and
\(\cycle[\tilde]{}{}=(\tilde{k},\tilde{l},\tilde{m},\tilde{n})\)
orthogonal in Euclidean geometry.  Also, the inner
product~\eqref{eq:cycle-iner-pr} expresses the Descartes--Kirillov
condition~\citelist{\cite{Kirillov06}*{Lem.~4.1(c)}
  \cite{Kisil12a}*{Ex.~5.26}} of \(\cycle{}{}\) and
\(\cycle[\tilde]{}{}\) to be externally tangent:
\begin{equation}
  \label{eq:tangent-cycles}
  %\det(\cycle{}{}+\cycle[\tilde]{}{})=
  \scalar{\cycle{}{}+\cycle[\tilde]{}{}}{\cycle{}{}+\cycle[\tilde]{}{}} =0 
  \quad\text{or}\quad
  (l+\tilde{l})^2+(n+\tilde{n})^2 -(m+\tilde{m})(k+\tilde{k})=0,
\end{equation}
where the representing vectors \(\cycle{}{}=(k,l,n,m)\) and
\(\cycle[\tilde]{}{}=(\tilde{k},\tilde{l},\tilde{m},\tilde{n})\) from
\(P\Space{R}{3}\) need to be normalised by the conditions
\(\scalar{\cycle{}{}}{\cycle{}{}}=1\) and
\(\scalar{\cycle[\tilde]{}{}}{\cycle[\tilde]{}{}}=1\).

\section{Continued Fractions and Cycles}
\label{sec:cont-fract-cycl}

Let \(M=\begin{pmatrix}
  a&b\\c&d
\end{pmatrix}\) be a matrix with real entries and the determinant
\(\det M\) equal to \(\pm 1\), we denote this by \(\delta=\det M\). As
mentioned in the previous section, to calculate the image of a cycle
\(\cycle{}{}\) under M\"obius transformations \(M\) we can use matrix
similarity \(M\cycle{}{}M^{-1}\). If \(M=
\begin{pmatrix}
  P_{n-1}&P_n\\
  Q_{n-1}&Q_n 
\end{pmatrix}\) is the
matrix~\eqref{eq:cf-part-frac-matrix} associated to a continued
fraction and we are interested in the partial fractions
\(\frac{P_n}{Q_n}\), it is natural to ask: 
\begin{quote}
  \emph{Which cycles \(\cycle{}{}\) have transformations
    \(M\cycle{}{}M^{-1}\) depending on the first (or on the second)
    columns of \(M\) only?}
\end{quote}

It is a straightforward
calculation with matrices\footnote{This calculation can be done with
  the help of the tailored Computer Algebra System (CAS) as described
  in~\citelist{\cite{Kisil12a}*{App.~B}\cite{Kisil05b}}.} to check the
following statements: 

\begin{lem}
  \label{le:first-col}
  The cycles \((0,0,1,m)\) (the horizontal lines \(v=m\)) are the only
  cycles, such that their
  images under the M\"obius transformation \(\begin{pmatrix}
    a&b\\c&d
  \end{pmatrix}\) are independent from the column \(
  \begin{pmatrix}
    b\\d
  \end{pmatrix}\). The image 
  associated to the column \(\begin{pmatrix} a\\c
  \end{pmatrix}\) is the horocycle \((c^2m,acm,\delta,a^2m)\), which
  touches the real line at \(\frac{a}{c}\) and has the radius
  \(\frac{1 }{mc^2}\).
\end{lem}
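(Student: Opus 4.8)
The plan is to recast ``independent of the second column'' as an invariance property and then obtain the image by a single explicit conjugation. Write $\delta=\det M=\pm1$, so $M^{-1}=\delta\begin{pmatrix}d&-b\\-c&a\end{pmatrix}$ and the action on cycles is $\cycle{}{}\mapsto M\cycle{}{}M^{-1}$. The first column $\begin{pmatrix}a\\c\end{pmatrix}$ already fixes the boundary value $M(\infty)=\frac ac$. The key remark is that any two matrices of determinant $\delta$ sharing this first column differ on the right by a unipotent factor $N_t=\begin{pmatrix}1&t\\0&1\end{pmatrix}$, the parabolic maps $z\mapsto z+t$ fixing $\infty$: indeed $M^{-1}\tilde M$ then fixes $\begin{pmatrix}1\\0\end{pmatrix}$ and has determinant $1$. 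Since $MN_t\cycle{}{}(MN_t)^{-1}=M\bigl(N_t\cycle{}{}N_t^{-1}\bigr)M^{-1}$ and $M(\cdot)M^{-1}$ is a bijection on cycles, the image depends on the first column alone exactly when $\cycle{}{}$ is fixed, up to a scalar, under conjugation by every $N_t$.

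First I would solve this invariance. A direct conjugation shows $N_t$ sends the cycle $(k,l,n,m)$ to $(k,\,l+tk,\,n,\,m+2tl+t^2k)$, the coordinate $n$ being unchanged. Demanding $t$-independence forces first $k=0$ and then $l=0$, so the surviving cycles form the pencil $(0,0,n,m)$; discarding the degenerate $n=0$ and normalising $n=1$ leaves precisely the family $(0,0,1,m)$. These are the horizontal lines: they are visibly the only cycles carried into themselves by all real translations $z\mapsto z+t$, which is the same invariance, and this geometric reading both confirms the algebra and explains why horocycles based at $\infty$ are the natural input.

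It remains to transport one such line. For $(0,0,1,m)$ the cycle matrix is $\cycle{}{}=\begin{pmatrix}\rmi&-m\\0&\rmi\end{pmatrix}$, and a single product $M\cycle{}{}M^{-1}$ returns $\begin{pmatrix}\delta acm+\rmi&-\delta a^2m\\\delta c^2m&-\delta acm+\rmi\end{pmatrix}$, i.e.\ the coordinates $(\delta c^2m,\delta acm,1,\delta a^2m)$; rescaling projectively by $\delta$ and using $\delta^2=1$ gives the stated form $(c^2m,acm,\delta,a^2m)$. For the geometry I would argue that, being the image under the real map $M$ of a horocycle tangent to $\mathbf R$ at $\infty$, the result is again tangent to $\mathbf R$, now at $M(\infty)=\frac ac$; substituting the coordinates into \eqref{eq:cycle-defn} and completing the square then places the centre directly above $\frac ac$ at height $\frac1{mc^2}$, which equals the radius, confirming both the tangency point and the value $\frac1{mc^2}$.

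The difficulties here are bookkeeping rather than structural. One must treat the degenerate columns separately---$c=0$, where the image is once more a horizontal line (a ``horocycle at $\infty$''), and $a=0$ in the coset normalisation---and one must keep two normalisations apart: the projective rescaling by $\delta$ that absorbs the determinant, and the fact that ``independent of the second column'' is meaningful only along the determinant-$\delta$ family, which is exactly what the reduction to the shears $N_t$ makes precise. I expect the genuine care to be needed in the final step, verifying that the image meets $\mathbf R$ in a single point rather than transversally, since this is what upgrades ``circle through $\frac ac$'' to ``horocycle at $\frac ac$''.
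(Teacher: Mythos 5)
Your proposal is correct, but it takes a genuinely different route from the paper, which offers no structured proof at all: the lemma is asserted there to follow from ``a straightforward calculation with matrices,'' possibly delegated to a computer algebra system --- that is, one conjugates a general cycle \((k,l,n,m)\) by a general matrix \(M\) and checks by brute force which choices of \((k,l,n,m)\) make every coefficient of \(M\cycle{}{}M^{-1}\) free of \(b\) and \(d\) subject to \(ad-bc=\delta\). Your explicit conjugation of \((0,0,1,m)\), giving \((\delta c^2m,\delta acm,1,\delta a^2m)\sim(c^2m,acm,\delta,a^2m)\), and the completing-the-square verification of the tangency point \(\frac{a}{c}\) and radius \(\frac{1}{mc^2}\), reproduce exactly the computational content the paper has in mind. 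What you add is the group-theoretic reduction for the uniqueness claim: matrices of determinant \(\delta\) sharing a first column form precisely the right coset \(\{MN_t\}\) of the upper unipotent subgroup, so independence of the second column is equivalent to projective invariance of the initial cycle under conjugation by every \(N_t\), i.e.\ under the translations \(z\mapsto z+t\), which is settled by one line of algebra. This buys a conceptual explanation (the admissible cycles are exactly those fixed by the parabolic stabiliser of \(\infty\), the ``horocycles based at \(\infty\)''), avoids elimination in four unknowns with a determinant constraint, and transfers essentially verbatim to Lemma~\ref{le:second-col} (lower unipotents fixing \(0\)) and to the Clifford-algebra analogues, Lemmas~\ref{le:first-col-mul}--\ref{le:second-col-mul}, where a CAS-style brute force would be far more painful. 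One point to tighten: your invariance argument pins the proportionality scalar to \(1\) only because the coefficients \(k\) and \(n\) are unchanged under conjugation by \(N_t\); the residual possibility \(k=n=0\) leads, after imposing invariance, only to the degenerate cycle \((0,0,0,m)\), and excluding it should be stated explicitly alongside your ``discarding the degenerate \(n=0\)'' step.
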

In particular, for the matrix~\eqref{eq:cf-recurr-matrix} the
horocycle is touching the real line at the point
\(\frac{P_{n-1}}{Q_{n-1}}=S_n(\infty)\)~\eqref{eq:part-frac-moebius-map}. 
\begin{lem}
  \label{le:second-col}
  The cycles \((k,0,1,0)\) (with the equation \(k(u^2+v^2)-2v=0\)) are
  the only cycles, such that their images under the M\"obius
  transformation \(\begin{pmatrix} a&b\\c&d
  \end{pmatrix}\) are independent from the column \(
  \begin{pmatrix}
    a\\c
  \end{pmatrix}\). The image 
  associated to the column  \(\begin{pmatrix} b\\d
  \end{pmatrix}\) is the horocycle
  \((d^2k,bdk,\delta,b^2k)\), which touches the real line at
  \(\frac{b}{d}\) and has the radius \(\frac{1 }{kd^2}\).
\end{lem}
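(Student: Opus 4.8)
The plan is to establish the statement by the same explicit matrix computation that proves the companion Lemma~\ref{le:first-col}, using that the image of a cycle with matrix $C=\begin{pmatrix} l+\rmi n & -m\\ k & -l+\rmi n\end{pmatrix}$ under the M\"obius map $M=\begin{pmatrix} a&b\\ c&d\end{pmatrix}$ is the similarity $MCM^{-1}$. Since $\delta=\det M=\pm1$ we have $M^{-1}=\delta\begin{pmatrix} d&-b\\ -c&a\end{pmatrix}$, so the entries of $MCM^{-1}$ are homogeneous quadratics in $a,b,c,d$ sharing an overall factor $\delta$. First I would carry this product out for a \emph{general} cycle $(k,l,n,m)$ and read the image tuple $(k',l',n',m')$ off the entries of the resulting matrix. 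The outcome is that $n'=n$ is left untouched while
\begin{equation*}
  k'=\delta(d^2k+2cdl+c^2m),\qquad l'=\delta(bdk+(ad+bc)l+acm),\qquad m'=\delta(b^2k+2abl+a^2m).
\end{equation*}

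Next I would impose the condition of the lemma, that the image be independent of the first column $\begin{pmatrix} a\\ c\end{pmatrix}$. Inspecting the formulas, every monomial containing $a$ or $c$ carries a factor $m$ (the terms $a^2m,c^2m,acm$) or $l$ (the terms $2cdl,adl,bcl,2abl$); hence independence from $a,c$ forces exactly $l=0$ and $m=0$, and conversely these two equations reduce $k',l',m'$ to functions of $b,d$ alone. The admissible cycles are therefore precisely $(k,0,n,0)$, and projective rescaling normalises $n=1$ to give the family $(k,0,1,0)$ claimed (the excluded value $n=0$ degenerates the tuple to the zero-radius point-cycle at the origin). Substituting $l=m=0$, $n=1$ returns the image $(\delta d^2k,\delta bdk,1,\delta b^2k)$, which after rescaling by the fixed sign $\delta$ is the asserted $(d^2k,bdk,\delta,b^2k)$, manifestly depending on the second column only.

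Finally I would translate this tuple into geometry. A cycle $(k,l,n,m)$ has centre with horizontal coordinate $l/k$ and vertical coordinate $n/k$ and squared radius $(l^2+n^2-mk)/k^2$; for $(d^2k,bdk,\delta,b^2k)$ this gives centre $\bigl(\tfrac{b}{d},\tfrac{\delta}{d^2k}\bigr)$, radius $\tfrac{1}{kd^2}$, and touch point $\tfrac{b}{d}$ on the real axis, the tangency relation $l^2=mk$ being confirmed by $(bdk)^2=b^2k\cdot d^2k$. Thus the image is the stated horocycle, and in the continued-fraction matrix~\eqref{eq:cf-part-frac-matrix} its touch point is $\tfrac{b}{d}=\tfrac{P_n}{Q_n}=S_n(0)$, the $n$-th partial quotient.

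The step I expect to be the main obstacle is not the algebra of $MCM^{-1}$, which is mechanical and can be delegated to the tailored CAS cited in the footnote, but making the phrase \emph{independent of a column} both precise and decisive. The determinant relation $ad-bc=\delta$ ties the two columns together and every image component secretly carries the factor $\delta$; the care lies in recognising that $\delta$ is only a fixed sign, extracting it so that the surviving data is a genuine polynomial in $b,d$, and in handling the projective normalisation (and the degenerate $n=0$ case) cleanly enough that ``only cycles'' indeed singles out the one-parameter family $(k,0,1,0)$.
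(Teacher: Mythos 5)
Your proposal is correct and takes essentially the same route as the paper: the paper offers no more proof than the remark that the lemmas follow by a ``straightforward calculation with matrices'' (delegated to the CAS cited in its footnote), and your explicit computation of $MCM^{-1}$, the deduction that independence from $(a,c)$ forces $l=m=0$, and the $\delta$-rescaling to $(d^2k,bdk,\delta,b^2k)$ with touch point $\frac{b}{d}$ and radius $\frac{1}{kd^2}$ is exactly that calculation carried out. The extra care you supply --- the projective normalisation $n=1$, the exclusion of the degenerate point-cycle at the origin, and the tangency check $l^2=mk$ --- is sound and merely makes explicit what the paper leaves implicit.
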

 In particular, for the matrix~\eqref{eq:cf-recurr-matrix} the horocycle is
touching the real line at the point
\(\frac{P_n}{Q_n}=S_n(0)\)~\eqref{eq:part-frac-moebius-map}. 
In view of these partial quotients the following cycles joining them
are of interest.
\begin{lem}
  \label{le:blue}
  A cycle \((0,1,n,0)\) (any non-horizontal line passing
  \(0\)) is transformed by
  \eqref{eq:cf-moebius-maps}--\eqref{eq:cf-part-frac-matrix} to the
  cycle \((2Q_nQ_{n-1}, P_nQ_{n-1}+Q_nP_{n-1},\delta n,2P_nP_{n-1})\),
  which passes points \(\frac{P_n}{Q_n}=S_n(0)\) and
  \(\frac{P_{n-1}}{Q_{n-1}}=S(\infty)\) on the real line.
\end{lem}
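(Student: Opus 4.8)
The plan is to follow the recipe already used for Lemmas~\ref{le:first-col} and~\ref{le:second-col}: encode the cycle \((0,1,n,0)\) as a \(2\times 2\) matrix, apply the M\"obius covariance \(\cycle{}{}\mapsto M\cycle{}{}M^{-1}\) with \(M=\begin{pmatrix}P_{n-1}&P_n\\Q_{n-1}&Q_n\end{pmatrix}\), and read the image off the resulting matrix. By the construction of Sec.~\ref{sec:mobi-transf-cycl} the matrix attached to \((0,1,n,0)\) is \(\cycle{}{}=\begin{pmatrix}1+\rmi n&0\\0&-1+\rmi n\end{pmatrix}\), and the key simplification is to split it as \(\cycle{}{}=\rmi n\,I+J\) with \(J=\begin{pmatrix}1&0\\0&-1\end{pmatrix}\). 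The scalar summand \(\rmi n\,I\) commutes with \(M\) and is therefore left unchanged by the conjugation, so the whole computation collapses to the single product \(MJM^{-1}\). (Here the cycle parameter and the continued-fraction index are both written \(n\); I would keep them carefully apart, since only the latter enters \(M\).)

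With \(M^{-1}=\delta^{-1}\begin{pmatrix}Q_n&-P_n\\-Q_{n-1}&P_{n-1}\end{pmatrix}\) and \(\delta=\det M=\pm1\), the triple product \(MJM^{-1}\) is a routine multiplication of \(2\times2\) matrices: its off-diagonal entries come out to be \(-2P_nP_{n-1}/\delta\) and \(2Q_nQ_{n-1}/\delta\), and its \((1,1)\) entry is \((P_nQ_{n-1}+Q_nP_{n-1})/\delta\). Adding back \(\rmi n\,I\) and comparing with the template \(\begin{pmatrix}l+\rmi n&-m\\k&-l+\rmi n\end{pmatrix}\) identifies the image cycle as \(\delta^{-1}(2Q_nQ_{n-1},\,P_nQ_{n-1}+Q_nP_{n-1},\,\delta n,\,2P_nP_{n-1})\). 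Because cycles are points of \(P\Space{R}{3}\) and \(\delta^2=1\), I may discard the overall projective factor \(\delta^{-1}\) and recover exactly the quadruple in the statement. This matrix step is the bulk of the argument, but it is entirely mechanical and may be verified by hand or with the CAS mentioned in the footnote.

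It remains to locate the two asserted points. The conceptual reason is covariance: the preimage \((0,1,n,0)\) is a straight line through \(0\) and through \(\infty\), so its image must pass through \(S_n(0)=\frac{P_n}{Q_n}\) and \(S_n(\infty)=\frac{P_{n-1}}{Q_{n-1}}\) by \eqref{eq:part-frac-moebius-map}. I would then confirm this directly by substituting the real-line coordinate into \eqref{eq:cycle-defn} for the image quadruple; the resulting quadratic factors as \(2(Q_nt-P_n)(Q_{n-1}t-P_{n-1})\), whose roots are precisely \(\frac{P_n}{Q_n}\) and \(\frac{P_{n-1}}{Q_{n-1}}\). The computation of the image is thus not the hard part; the only genuinely delicate points are bookkeeping ones, namely tracking the projective factor \(\delta\) and matching the normalisation of the real line used in \eqref{eq:cycle-defn} so that the restricted quadratic displays the intended partial quotients rather than their \(\delta\)-twisted counterparts.
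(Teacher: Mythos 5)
Your proposal is correct and follows exactly the route the paper takes: the paper disposes of Lemmas~\ref{le:first-col}--\ref{le:blue} with the single remark that they are ``a straightforward calculation with matrices'' (CAS-assisted), i.e.\ conjugation \(M\cycle{}{}M^{-1}\) of the cycle matrix, which is precisely the computation you carry out by hand. Your extras --- the split \(\cycle{}{}=\rmi n\,I+J\), the explicit handling of the projective factor \(\delta\), and the factorisation \(2(Q_nt-P_n)(Q_{n-1}t-P_{n-1})\) confirming the two points on the real line --- are correct fillings-in of detail rather than a different argument.
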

The above three families contain cycles with specific relations to
partial quotients through M\"obius transformations. There is one
degree of freedom in each family: \(m\), \(k\) and \(n\),
respectively. We can use the parameters to create an ensemble of three
cycles (one from each family) with some M\"obius-invariant
interconnections. Three most natural arrangements are illustrated by
Fig.~\ref{fig:vari-arrang-three}.  The first row presents the initial
selection of cycles, the second row---their images after a M\"obius
transformation (colours are preserved). The arrangements are as
follows:
\begin{figure}[htbp]
  \centering
  \includegraphics{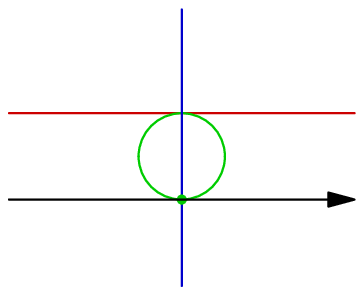}\hfil
  \includegraphics{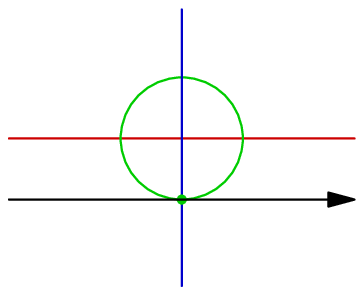}\hfil
  \includegraphics{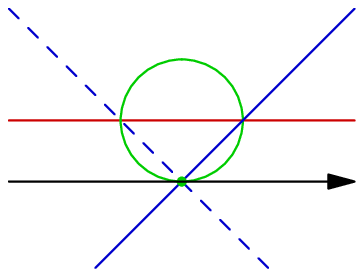}
\\
  \includegraphics{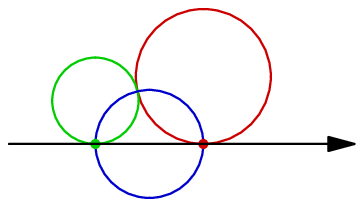}\hfil
  \includegraphics{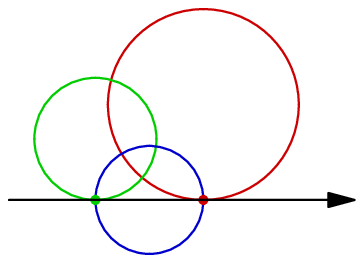}\hfil
  \includegraphics{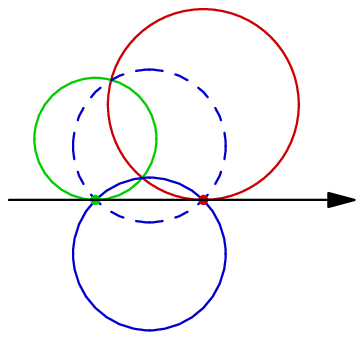}
  \caption[Various arrangements for three cycles]{Various arrangements
    for three cycles. The first row shows the initial position, the
    second row---after a M\"obius
    transformation (colours are preserved). \\
    The left column shows the arrangement used in the
    paper~\cite{BeardonShort14a}: two horocycles touching, the connecting
    cycle is passing their common point
    and is orthogonal to the real line.\\
    The central column presents two orthogonal horocycles and the
    connecting
    cycle orthogonal to them. \\
    The horocycles in the right column are again orthogonal but the
    connecting cycle passes one of their intersection points and makes
    \(45^\circ\) with the real axis.}
  \label{fig:vari-arrang-three}
\end{figure}

\begin{enumerate}
\item The left column shows the arrangement used in the
  paper~\cite{BeardonShort14a}: two horocycles are tangent, the third
  cycle, which we call \emph{connecting}, passes three points of
  pair-wise contact between horocycles and the real line. The
  connecting cycle is also orthogonal to horocycles and the real
  line. The arrangement corresponds to the following values \(m=2\),
  \(k=2\), \(n=0\). These parameters are uniquely defined by the above
  tangent and orthogonality conditions together with the requirement
  that the horocycles' radii agreeably depend from the consecutive
  partial quotients' denominators: \(\frac{1}{2Q_{n-1}^2}\) and
  \(\frac{1}{2Q_{n}^2}\) respectively. This follows from the explicit
  formulae of image cycles calculated in Lemmas~\ref{le:first-col}
  and~\ref{le:second-col}.

\item The central column of Fig.~\ref{fig:vari-arrang-three} presents
  two orthogonal horocycles and the connecting cycle orthogonal to
  them. The initial cycles have parameters \(m=\sqrt{2}\),
  \(k=\sqrt{2}\), \(n=0\). Again, these values follow from the
  geometric conditions and the alike dependence of radii from
  the partial quotients' denominators: \(\frac{\sqrt{2}}{2Q_{n-1}^2}\) and
  \(\frac{\sqrt{2}}{2Q_{n}^2}\).

\item Finally, the right column have the same two orthogonal
  horocycles, but the connecting cycle passes one of two horocycles'
  intersection points. Its mirror reflection in the real
  axis satisfying~\eqref{eq:cycle-equiv} (drawn in the dashed style) passes
  the second intersection point.  This corresponds to the values
  \(m=\sqrt{2}\), \(k=\sqrt{2}\), \(n=\pm 1\). The connecting cycle makes
  the angle \(45^\circ\) at the points of intersection with the real
  axis. It also has the radius
  \(\frac{\sqrt{2}}{2}\modulus{\frac{P_n}{Q_n}-\frac{P_{n-1}}{Q_{n-1}}}
  =\frac{\sqrt{2}}{2}\frac{1}{\modulus{Q_nQ_{n-1}}}\)---the geometric
  mean of radii of two other cycles. This again repeats the relation
  between cycles' radii in the first case.
\end{enumerate}
Three configurations have fixed ratio \(\sqrt{2}\) between respective
horocycles' radii. Thus, they are equally suitable for the proofs based on
the size of horocycles, e.g.~\cite{BeardonShort14a}*{Thm.~4.1}. 

On the other hand, there is a tiny computational advantage in the case
of orthogonal horocycles. Let we have the sequence \(p_j\) of partial
fractions \(p_j=\frac{P_j}{Q_j}\) and want to rebuild the
corresponding chain of horocycles. A horocycle with the point of
contact \(p_j\) has components \((1,p_j, n_j, p_j^2)\), thus only the
value of \(n_j\) need to be calculated at every step. If we use the
condition ``to be tangent to the previous horocycle'', then the quadratic
relation~\eqref{eq:tangent-cycles} shall be solved. Meanwhile, the
orthogonality relation~\eqref{eq:first-ortho-gen} is linear in \(n_j\).

\section{Multi-dimensional M\"obius maps and cycles}
\label{sec:multi-dimens-vari}

It is natural to look for multidimensional generalisations of
continued fractions.  A geometric approach based on M\"obius
transformation and Clifford algebras was proposed
in~\cite{Beardon03a}. % Alternative construction based on chains of
% horospheres was proposed later in~\cite{BeardonShort14a}. We will link
% them using the fact that the above connection between M\"obius
% transformations and cycles can be extended to any Euclidean
% space~\cites{Cnops02a,FillmoreSpringer90a}.
The Clifford algebra \(\Cliff{n}\) is the associative unital algebra over
\(\Space{R}{}\) generated by the elements \(e_1\),\ldots,\(e_n\)
satisfying the following relation:
\begin{displaymath}
  e_i e_j + e_je_i=-2\delta_{ij},
\end{displaymath}
where \(\delta_{ij}\) is the Kronecker delta. An element of
\(\Cliff{n}\) having the form \(x=x_1e_1+\ldots+x_ne_n\) can be
associated with vectors \((x_1,\ldots,x_n)\in\Space{R}{n}\).  The
\emph{reversion} \(a\mapsto a^*\) in
\(\Cliff{n}\)~\cite{Cnops02a}*{(1.19(ii))} is defined on vectors by
\(x^*=x\) and extended to other elements by the relation
\((ab)^*=b^*a^*\). Similarly the \emph{conjugation} is defined on
vectors by \(\bar{x}=-x\) and the relation
\(\overline{ab}=\bar{b}\bar{a}\). We also use the notation
\(\modulus{a}^2=a\bar{a}\geq 0\) for any product \(a\) of vectors.
An important observation is that any non-zero vectors \(x\) has a
multiplicative inverse: \(x^{-1}=\frac{\bar{x}}{\modulus{x}^2}\).

By Ahlfors~\cite{Ahlfors86} (see
also~\citelist{\cite{Beardon03a}*{\S~5} \cite{Cnops02a}*{Thm.~4.10}})
a matrix \(M=
\begin{pmatrix}
  a&b\\c&d
\end{pmatrix}\) with Clifford entries defines a linear-fractional transformation of
\(\Space{R}{n}\) if the
following conditions are satisfied:
\begin{enumerate}
\item \(a\), \(b\), \(c\) and \(d\) are products of vectors in
  \(\Space{R}{n}\);
\item\label{it:ab-cd-ca-db-vectors} \(ab^*\), \(cd^*\), \(c^*a\) and \(d^*b\) are vectors in
  \(\Space{R}{n}\);
\item the pseudodeterminant \(\delta:=ad^*-bc^*\) is a non-zero real number.
\end{enumerate}
Clearly we can scale the matrix to have the pseudodeterminant
\(\delta=\pm 1\) without an effect on the related linear-fractional
transformation. Define, cf.~\cite{Cnops02a}*{(4.7)}
\begin{equation}
  \label{eq:matrix-bar-star}
  \bar{M}=
\begin{pmatrix}
  d^*&-b^*\\-c^*&a^*
\end{pmatrix}\qquad \text{ and } \qquad
M^*=\begin{pmatrix}
  \bar{d} &\bar{b}\\\bar{c}&\bar{a}
\end{pmatrix}.
\end{equation}
Then \(M\bar{M}=\delta I\) and \(\bar{M}=\kappa M^*\), where
\(\kappa=1\) or \(-1\) depending either \(d\) is a product of even or
odd number of vectors.

To adopt the discussion from Section~\ref{sec:mobi-transf-cycl} to
several dimensions we use vector rather than paravector
formalism, see~\cite{Cnops02a}*{(1.42)} for a discussion. 
Namely, we consider vectors \(x\in\Space{R}{n+1}\) as elements
\(x=x_1e_1+\ldots+x_ne_n+x_{n+1} e_{n+1}\) in \(\Cliff{n+1}\). 
Therefore we can extend the M\"obius
transformation defined by \(M=
\begin{pmatrix}
  a&b\\c&d
\end{pmatrix}\) with \(a,b,c,d\in\Cliff{n}\) to act on
\(\Space{R}{n+1}\). Again, such transformations commute with the
reflection \(R\) in the hyperplane \(x_{n+1}=0\):
\begin{displaymath}
  R:\quad x_1e_1+\ldots+x_ne_n+x_{n+1} e_{n+1}\quad
  \mapsto \quad x_1e_1+\ldots+x_ne_n-x_{n+1} e_{n+1}.
\end{displaymath}
Thus we can consider the M\"obius maps acting on the equivalence
classes \(x\sim R(x)\).

Spheres and hyperplanes in \(\Space{R}{n+1}\)---which we continue to
call cycles---can be associated to
\(2\times 2\) matrices~\citelist{\cite{FillmoreSpringer90a} \cite{Cnops02a}*{(4.12)}}:
\begin{equation}
  \label{eq:spheres-Rn}
  k\bar{x}x-l\bar{x}-x\bar{l}+m=0 \quad \leftrightarrow \quad 
  \cycle{}{}=
  \begin{pmatrix}
    l & m\\
    k & \bar{l}
  \end{pmatrix},
\end{equation}
where \(k, m\in\Space{R}{}\) and \(l\in\Space{R}{n+1}\).  For brevity
we also encode a cycle by its coefficients \((k,l,m)\).  A
justification of~\eqref{eq:spheres-Rn} is provided by the identity:
\begin{displaymath}
  \begin{pmatrix}
    1&\bar{x}
  \end{pmatrix}
  \begin{pmatrix}
    l & m\\
    k & \bar{l}
  \end{pmatrix}
  \begin{pmatrix}
    {x}\\1
  \end{pmatrix}=
  kx\bar{x}-l\bar{x}-x\bar{l}+m,\quad \text{ since } \bar{x}=-x \text{
    for } x\in\Space{R}{n}. 
\end{displaymath}
The identification is also M\"obius-covariant in the sense that the
transformation associated with the Ahlfors matrix \(M\) send a cycle
\(\cycle{}{}\) to the cycle \(M\cycle{}{}M^{*}\)~\cite{Cnops02a}*{(4.16)}.
The equivalence  \(x\sim R(x)\) is extended to spheres:
\begin{displaymath}
  \begin{pmatrix}
    l & m\\
    k & \bar{l}
  \end{pmatrix}\quad\sim   \quad
  \begin{pmatrix}
    R(l) & m\\
    k & R(\bar{l})
  \end{pmatrix}
\end{displaymath}
since it is preserved by the M\"obius transformations with
coefficients from \(\Cliff{n}\). 

Similarly to~\eqref{eq:cycle-iner-pr} we define the M\"obius-invariant
inner product of cycles by the identity
\(\scalar{\cycle{}{}}{\cycle[\tilde]{}{}}=\Re
\tr(\cycle{}{}\cycle[\tilde]{}{})\), where \(\Re\) denotes the scalar
part of a Clifford number.  The orthogonality condition
\(\scalar{\cycle{}{}}{\cycle[\tilde]{}{}}=0\) means that the
respective cycle are geometrically orthogonal in
\(\Space{R}{n+1}\). 

\section{Continued fractions from Clifford algebras and horocycles}
\label{sec:mult-cont-fract}

There is an association between the triangular matrices and the elementary
M\"obius maps of \(\Space{R}{n}\), cf.~\eqref{eq:cf-moebius-maps}:
\begin{equation}
  \label{eq:clifford-inversion}
  \begin{pmatrix}
    0&1\\1&b
  \end{pmatrix}: \
  x\ \mapsto\ (x+b)^{-1}\,,\ %-\frac{x}{\modulus{x}^2}+b,
  \text{ where }%\(\modulus{x}^2=x_1^2+\ldots+x_n^2\) and 
  x=x_1e_1+\ldots+x_ne_n, b=b_1e_1+\ldots b_ne_n,
\end{equation}
Similar to the real line case in
Section~\ref{sec:continued-fractions}, Beardon
proposed~\cite{Beardon03a} to consider the composition of a series of
such transformations as multidimensional continued fraction,
cf.~\eqref{eq:cf-moebius-maps}. It can be again represented as the the
product~\eqref{eq:cf-part-frac-matrix} of the respective \(2\times 2\)
matrices. Another construction of multidimensional continued fractions
based on horocycles was hinted in~\cite{BeardonShort14a}. We wish to
clarify the connection between them.  The bridge is provided by the
respective modifications of Lem.~\ref{le:first-col}--\ref{le:blue}.
\begin{lem}
  \label{le:first-col-mul}
  The cycles \((0,e_{n+1},m)\) (the ``horizontal'' hyperplane
  \(x_{n+1}=m\)) are the only cycles, such that their images under the
  M\"obius transformation \(\begin{pmatrix} a&b\\c&d
  \end{pmatrix}\) are independent from the column \(
  \begin{pmatrix}
    b\\d
  \end{pmatrix}\). The image 
  associated to the column \(\begin{pmatrix} a\\c
  \end{pmatrix}\) is the horocycle \((-m\modulus{c}^2, -ma\bar{c}+\delta  e_{n+1},m\modulus{a}^2)\), which
  touches the hyperplane \(x_{n+1}=0\) at \(\frac{a\bar{c}}{\modulus{c}^2}\) and has the radius
  \(\frac{1 }{m\modulus{c}^2}\).
\end{lem}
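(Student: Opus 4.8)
The plan is to prove the statement by directly transforming the cycle via the covariant rule $\cycle{}{}\mapsto M\cycle{}{}M^{*}$ of \eqref{eq:spheres-Rn}--\eqref{eq:matrix-bar-star}, and to treat the uniqueness clause afterwards. First I would expand $M\cycle{}{}M^{*}$ for a \emph{general} cycle $\cycle{}{}=\begin{pmatrix} l & m\\ k & \bar l\end{pmatrix}$, with $M=\begin{pmatrix} a&b\\ c&d\end{pmatrix}$ and $M^{*}=\begin{pmatrix}\bar d&\bar b\\ \bar c&\bar a\end{pmatrix}$, and read off the three entries of the image cycle $(k',l',m')$. Using $k,m\in\Space{R}{}$ one gets $k'=cl\bar d+k\modulus{d}^{2}+m\modulus{c}^{2}+d\bar l\bar c$, $l'=al\bar d+bk\bar d+am\bar c+b\bar l\bar c$ and $m'=al\bar b+k\modulus{b}^{2}+m\modulus{a}^{2}+b\bar l\bar a$. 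The whole statement is then an analysis of when these are free of the second column $\begin{pmatrix} b\\ d\end{pmatrix}$.

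Two facts drive the cancellation. Since $e_{n+1}$ anticommutes with every vector of $\Space{R}{n}$, moving it across a product $w\in\Cliff{n}$ of $r$ vectors produces the grade involution, $we_{n+1}=e_{n+1}\hat w$ with $\hat w=(-1)^{r}w$; combined with $\bar w=(\hat w)^{*}$ this rewrites the mixed terms through $\widehat{cd^{*}}$, $\widehat{ad^{*}-bc^{*}}$ and $\widehat{ab^{*}}$. Substituting $k=0$, $l=e_{n+1}$ (so $\bar l=-e_{n+1}$) and invoking the Ahlfors conditions---that $ab^{*}$ and $cd^{*}$ are vectors, hence fixed by the reversion $*$, and that $\delta=ad^{*}-bc^{*}\in\Space{R}{}$---makes each bracketed $(b,d)$-contribution collapse: the mixed terms of $k'$ and of $m'$ vanish because $cd^{*}$ and $ab^{*}$ equal their own reversions, while that of $l'$ reduces to $\delta e_{n+1}$ because $\delta$ is real. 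This leaves $k',l',m'$ independent of $b$ and $d$ and reproduces, up to the overall projective scaling and the sign fixed by the conventions, the triple $(-m\modulus{c}^{2},-ma\bar c+\delta e_{n+1},m\modulus{a}^{2})$.

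The geometric data then read off from \eqref{eq:spheres-Rn}: dividing the image triple by its $k'$-entry exhibits a centre whose $\Space{R}{n}$-projection is the point of contact $\frac{a\bar c}{\modulus{c}^{2}}=ac^{-1}$ and whose $e_{n+1}$-height has magnitude $\frac{1}{m\modulus{c}^{2}}$ (since $\modulus{\delta}=1$), equal to the radius, so the image is a horocycle tangent to $x_{n+1}=0$. For uniqueness I would argue that, once the pseudodeterminant is normalised to $\pm1$, all Ahlfors matrices with a common first column $\begin{pmatrix} a\\ c\end{pmatrix}$ differ by right multiplication by a factor $\begin{pmatrix}1&\beta\\ 0&\pm1\end{pmatrix}$, that is by a translation $x\mapsto x+\beta$ fixing $\infty$; hence $M\cycle{}{}M^{*}$ is independent of $\begin{pmatrix} b\\ d\end{pmatrix}$ precisely when $\cycle{}{}$ is invariant under every such translation, and the only cycles with that property are the horizontal hyperplanes $(0,e_{n+1},m)$. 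The same conclusion is available purely algebraically: the $\modulus{b}^{2}$-term in $m'$ forces $k=0$, and the residual $b$-dependence $al\bar b+b\bar l\bar a$ then cancels for all admissible $b$ only when $l$ is parallel to $e_{n+1}$.

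The main obstacle is this cancellation of the $(b,d)$-dependence: it is not automatic but rests on invoking the Ahlfors conditions in exactly the right groupings and on carefully tracking the signs introduced when $e_{n+1}$ is commuted past Clifford numbers. The uniqueness direction hides a second subtlety---the second column is \emph{not} a free parameter but is constrained relative to $(a,c)$---so one must first make precise, through the translation-torsor description (or an explicit parametrisation of the admissible $(b,d)$), what ``independence from the column $\begin{pmatrix} b\\ d\end{pmatrix}$'' means before the cancellation argument can be run in reverse.
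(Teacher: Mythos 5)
Your argument is correct and, at its core, it is the paper's own: the paper dismisses this lemma (together with Lemma~\ref{le:second-col-mul}) with the single sentence that the proofs ``are reduced to multiplications of respective matrices with Clifford entries'', and the cancellation engine you run --- moving \(e_{n+1}\) through products of vectors via \(e_{n+1}\bar d=d^*e_{n+1}\), then using the Ahlfors conditions to kill \(cd^*-(cd^*)^*\) and \(ab^*-(ab^*)^*\) and to produce \(\delta e_{n+1}\) from \(ad^*-bc^*\) --- is exactly the computation the paper does write out, in its proof of Lemma~\ref{le:blue-multi}. Where you genuinely add something is the uniqueness clause (``the only cycles''), which the paper never argues at all: your torsor observation is sound, since \(N=M_1^{-1}M_2\) has vanishing lower-left entry because \(a^*c=(c^*a)^*=c^*a\), the first-column match forces its diagonal to be \(1\) and \(\pm 1\), and the Ahlfors conditions force \(\beta\in\Space{R}{n}\); hence independence from the second column is precisely translation-invariance of the initial cycle, which pins down the horizontal hyperplanes. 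This also resolves the subtlety you flag about what ``independence from \(\begin{pmatrix}b\\d\end{pmatrix}\)'' means, and it is the cleanest way to make the claim precise.

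One caution, which is a defect of the paper rather than of your proof. Your computed image \((m\modulus{c}^2,\ ma\bar c+\delta e_{n+1},\ m\modulus{a}^2)\) is \emph{not} projectively equal to the displayed triple \((-m\modulus{c}^2,\ -ma\bar c+\delta e_{n+1},\ m\modulus{a}^2)\), and no appeal to ``overall projective scaling and the sign fixed by the conventions'' can reconcile them: an overall sign would also flip the \(\delta e_{n+1}\) and \(m\modulus{a}^2\) entries. It is your triple that is consistent with the geometric assertions of the statement: for it,
\begin{equation*}
  \modulus{l'}^2-k'm'=m^2\modulus{a}^2\modulus{c}^2+\delta^2-m^2\modulus{a}^2\modulus{c}^2=1,
\end{equation*}
so the radius is \(1/(m\modulus{c}^2)\) and the centre lies at exactly that height above \(a\bar c/\modulus{c}^2\), i.e. the image is a horocycle tangent to \(x_{n+1}=0\) at \(a\bar c/\modulus{c}^2\) as claimed; with the paper's signs this determinant computation fails and the stated radius and tangency would be false. (The same typo occurs in Lemma~\ref{le:second-col-mul}, where \(-kb\bar b\) should be \(kb\bar b\).) So do not weaken your conclusion to match the displayed formula --- your signs are the correct ones.
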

\begin{lem}
  \label{le:second-col-mul}
  The cycles \((k,e_{n+1},0)\) (with the equation \(k(u^2+v^2)-2v=0\)) are
  the only cycles, such that their images under the M\"obius
  transformation \(\begin{pmatrix} a&b\\c&d
  \end{pmatrix}\) are independent from the column \(
  \begin{pmatrix}
    a\\c
  \end{pmatrix}\). The image 
  associated to the column  \(\begin{pmatrix} b\\d
  \end{pmatrix}\) is the horocycle
  \((k\modulus{d}^2,kb\bar{d}+\delta e_{n+1},-kb \bar{b})\), which touches the
  hyperplane \(x_{n+1}=0\) at \(\frac{b\bar{d}}{\modulus{d}^2}\) and has
  the radius \(\frac{1 }{k\modulus{d}^2}\).
\end{lem}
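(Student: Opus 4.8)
The plan is to mimic the proof of Lemma~\ref{le:second-col} but in the Clifford-algebra setting, using the covariance rule $\cycle{}{}\mapsto M\cycle{}{}M^{*}$ from~\eqref{eq:spheres-Rn} in place of the conjugation $M\cycle{}{}M^{-1}$ used over $\Space{C}{}$. First I would write the initial cycle $(k,e_{n+1},0)$ as the matrix $\cycle{}{}=\begin{pmatrix} e_{n+1} & 0\\ k & \bar{e}_{n+1}\end{pmatrix}$ according to the dictionary in~\eqref{eq:spheres-Rn}, recalling $\bar{e}_{n+1}=-e_{n+1}$, and then form the product $M\cycle{}{}M^{*}$ with $M=\begin{pmatrix} a&b\\c&d\end{pmatrix}$ and $M^{*}=\begin{pmatrix}\bar{d}&\bar{b}\\\bar{c}&\bar{a}\end{pmatrix}$ from~\eqref{eq:matrix-bar-star}. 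Carrying out the two matrix multiplications gives a $2\times 2$ Clifford matrix whose four entries are the coefficients of the image cycle; the task is to read off $(k\modulus{d}^2, kb\bar{d}+\delta e_{n+1}, -kb\bar{b})$ and to check the claimed independence from the column $\begin{pmatrix} a\\c\end{pmatrix}$.

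Concretely I expect the lower-left entry (the new $k$-coefficient) to come out as $k\,d\bar{d}=k\modulus{d}^2$ together with a contribution from $e_{n+1}$; the upper-right entry (the new $m$-coefficient) to be $-k\,b\bar{b}=-k\modulus{b}^2$, and the upper-left entry (the new $l$-vector) to be $k\,b\bar{d}$ plus a term carrying the pseudodeterminant $\delta$ via the off-diagonal $e_{n+1}$ piece, yielding $kb\bar{d}+\delta e_{n+1}$. The crucial point driving the independence claim is that the column $\begin{pmatrix} a\\c\end{pmatrix}$ of $M$ pairs against the column $\begin{pmatrix}\bar{c}\\\bar{a}\end{pmatrix}$ of $M^{*}$, and these contributions must cancel precisely because the initial matrix $\cycle{}{}$ has a zero in its $(1,2)$ slot (the $m=0$ condition). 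Establishing uniqueness then amounts to the converse: solving the independence requirement forces $l=e_{n+1}$ (up to the equivalence~\eqref{eq:cycle-equiv}) and $m=0$, leaving $k$ as the single free parameter. Finally, from the image coefficients $(k\modulus{d}^2, kb\bar{d}+\delta e_{n+1}, -k\modulus{b}^2)$ I would extract the geometric data: the touching point $\frac{b\bar{d}}{\modulus{d}^2}$ on the hyperplane $x_{n+1}=0$ and the radius $\frac{1}{k\modulus{d}^2}$, by comparing with the standard form of a horocycle exactly as in the complex case.

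The main obstacle will be the noncommutativity of the Clifford entries: unlike the matrix proofs of Lemmas~\ref{le:first-col}--\ref{le:blue}, here one cannot freely reorder factors, so each product such as $a e_{n+1}\bar{c}$ must be handled respecting the order dictated by $M\cycle{}{}M^{*}$, and one must use the Ahlfors conditions (that $ab^*$, $cd^*$, $c^*a$, $d^*b$ are vectors and that $\delta=ad^*-bc^*\in\Space{R}{}$) to collapse the mixed products into the stated scalar-and-vector form. In particular, verifying that the $e_{n+1}$-direction separates cleanly from the $\Space{R}{n}$-directions—so that the off-diagonal $\delta e_{n+1}$ survives while the spurious non-vector parts vanish—is the delicate computation; I would lean on the relations $M\bar{M}=\delta I$ and $\bar{M}=\kappa M^{*}$ to simplify. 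As with the earlier lemmas, this is ultimately a finite (if noncommutative) computation, best confirmed with the tailored CAS referenced in the footnote to Lemma~\ref{le:first-col}.
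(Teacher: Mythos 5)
Your plan is essentially the paper's own proof: the paper disposes of this lemma and Lemma~\ref{le:first-col-mul} with the single remark that their proofs ``are reduced to multiplications of respective matrices with Clifford entries,'' which is exactly the computation \(M\cycle{}{}M^{*}\) you outline, and the tools you name (the relation \(e_{n+1}\bar{d}=d^{*}e_{n+1}\), the Ahlfors conditions giving \(ab^{*}-ba^{*}=0\), \(cd^{*}-dc^{*}=0\), \(ad^{*}-bc^{*}=\delta\)) are the same ones the paper deploys explicitly in its proof of Lemma~\ref{le:blue-multi}; your observation that \(m=0\) is what kills the terms pairing \((a,c)\) against \((\bar{c},\bar{a})\), while \(l=e_{n+1}\) plus the Ahlfors conditions collapse the remaining \(a,c\)-dependence, is the right mechanism for both the formula and the uniqueness claim. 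One concrete warning for when you grind through the multiplication: with the paper's conventions~\eqref{eq:spheres-Rn} the \((1,2)\) entry of \(M\cycle{}{}M^{*}\) comes out as \(+kb\bar{b}\), not \(-kb\bar{b}\). The plus sign is in fact the one consistent with the lemma's own geometric claims---for a cycle \((k',l',m')\) the squared radius is \((\modulus{l'}^{2}-m'k')/k'^{2}\), which equals \(1/(k\modulus{d}^{2})^{2}\) precisely when \(m'=+kb\bar{b}\)---and it also matches the two-dimensional prototype Lemma~\ref{le:second-col}, where the image is \((d^2k,bdk,\delta,b^2k)\) with positive \(b^2k\); so treat the minus sign in the statement (and the analogous stray signs in Lemma~\ref{le:first-col-mul}) as a typo in the paper, not as something your computation should be forced to reproduce.
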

The proof of the above lemmas are reduced to multiplications of
respective matrices with Clifford entries. 
\begin{lem}
\label{le:blue-multi}
% [[ a L d1+b l1 c1,
%   b l1 a1+  a L b1],
%  [ d l1 c1+ c L d1,
%    c L b1+ d l1 a1]]
% k = d l1 c1+ c l d1
% l =  a l d1 +b l1 c1
% m =   a l b1 +b l1 a1 
A cycle \(\cycle{}{}=(0,l,0)\), where \(l=x+re_{n+1}\) and \(0\neq
x\in\Space{R}{n}\), \(r\in\Space{R}{}\), that is any non-horizontal
hyperplane passing the origin, is transformed into
%\((c l\bar{d}+d\bar{l}\bar{c}, a l\bar{d}+b\bar{l}\bar{c}, a
%l\bar{b}+ b\bar{l}\bar{a})\).
\(M\cycle{}{}M^*=(c x\bar{d}+d\bar{x}\bar{c}, a
x\bar{d}+b\bar{x}\bar{c}+\delta r e_{n+1}, a x\bar{b}+
b\bar{x}\bar{a})\).  This cycle passes points
\(\frac{a\bar{c}}{\modulus{c}^2}\) and \(\frac{b\bar{d}}{\modulus{d}^2}\).  

If \(x= \bar{c}d\), then the centre of
\begin{displaymath}
  M\cycle{}{}M^*=(2\modulus{c}^2\modulus{d}^2,
a\bar{c}\modulus{d}^2+b\bar{d}\modulus{c}^2, (a\bar{c})(d\bar{b})+(b\bar{d})(c\bar{a}))
\end{displaymath}
is
\(\frac{1}{2}(\frac{a\bar{c}}{\modulus{c}^2}+\frac{b\bar{d}}{\modulus{d}^2})+
\frac{\delta r}{2\modulus{c}^2\modulus{d}^2}e_{n+1}\), that is, the centre
belongs to the two-dimensional plane passing the points
\(\frac{a\bar{c}}{\modulus{c}^2}\) and \(\frac{b\bar{d}}{\modulus{d}^2}\)
and orthogonal to the hyperplane \(x_{n+1}=0\).
\end{lem}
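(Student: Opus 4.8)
The plan is to reduce everything to a single matrix product and then separate scalar from vector parts. First I would write the hyperplane \(\cycle{}{}=(0,l,0)\) with \(l=x+re_{n+1}\) in the matrix form \(\begin{pmatrix} l & 0\\ 0 & \bar l\end{pmatrix}\) prescribed by~\eqref{eq:spheres-Rn}, and compute the M\"obius image \(M\cycle{}{}M^{*}\) as the triple product
\[
  \begin{pmatrix} a&b\\c&d\end{pmatrix}
  \begin{pmatrix} l&0\\0&\bar l\end{pmatrix}
  \begin{pmatrix} \bar d&\bar b\\\bar c&\bar a\end{pmatrix}
  =
  \begin{pmatrix} al\bar d+b\bar l\bar c & al\bar b+b\bar l\bar a\\
    cl\bar d+d\bar l\bar c & cl\bar b+d\bar l\bar a\end{pmatrix}.
\]
Reading off the \((2,1)\), \((1,1)\) and \((1,2)\) entries produces the candidate components \(\tilde k=cl\bar d+d\bar l\bar c\), \(\tilde l=al\bar d+b\bar l\bar c\) and \(\tilde m=al\bar b+b\bar l\bar a\); since the identification~\eqref{eq:spheres-Rn} is M\"obius-covariant, the product is again of cycle shape, so these are exactly the coefficients of the image cycle.

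Next I would substitute \(\bar l=-l=-x-re_{n+1}\) (because \(l\) is a vector) and split each entry into its \(x\)-part and its \(r\)-part. The \(x\)-part immediately reproduces \(cx\bar d+d\bar x\bar c\), \(ax\bar d+b\bar x\bar c\) and \(ax\bar b+b\bar x\bar a\). For the \(r\)-part the key device is the anticommutation \(we_{n+1}=(-1)^{\deg w}e_{n+1}w\) valid for \(w\in\Cliff{n}\), which lets me pull the single factor \(e_{n+1}\) to the right of every monomial. Combined with the Ahlfors relations packaged in \(M\bar M=\delta I\)---the off-diagonal identities \(cd^{*}=dc^{*}\) and \(ab^{*}=ba^{*}\) together with \(\bar M=\kappa M^{*}\)---this forces \(c\bar d+d\bar c=0\) and \(a\bar b+b\bar a=0\), so the \(r\)-contributions to \(\tilde k\) and to \(\tilde m\) cancel, whereas the diagonal identity \(ad^{*}-bc^{*}=\delta\) collapses the \(r\)-contribution to \(\tilde l\) into exactly \(\delta r\,e_{n+1}\). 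This yields the asserted image \((c x\bar d+d\bar x\bar c,\ a x\bar d+b\bar x\bar c+\delta r e_{n+1},\ a x\bar b+b\bar x\bar a)\).

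For the incidence statement I would avoid a second computation: the hyperplane \((0,l,0)\) passes through the origin and through \(\infty\), so by M\"obius covariance its image passes through \(M(0)=bd^{-1}=\frac{b\bar d}{\modulus{d}^{2}}\) and \(M(\infty)=ac^{-1}=\frac{a\bar c}{\modulus{c}^{2}}\), both of which therefore lie in the boundary hyperplane \(x_{n+1}=0\). Finally, in the special case \(x=\bar c d\) I would substitute, using \(c\bar c=\modulus{c}^{2}\), \(d\bar d=\modulus{d}^{2}\) and \(\bar x=\overline{\bar c d}=\bar d c\), to obtain \(\tilde k=2\modulus{c}^{2}\modulus{d}^{2}\), \(\tilde l=a\bar c\modulus{d}^{2}+b\bar d\modulus{c}^{2}+\delta r e_{n+1}\) and \(\tilde m=(a\bar c)(d\bar b)+(b\bar d)(c\bar a)\); dividing the vector coefficient by \(k\) gives the centre \(\frac{\tilde l}{\tilde k}=\frac12\bigl(\frac{a\bar c}{\modulus{c}^{2}}+\frac{b\bar d}{\modulus{d}^{2}}\bigr)+\frac{\delta r}{2\modulus{c}^{2}\modulus{d}^{2}}e_{n+1}\), whose first summand is the midpoint of the two boundary points and whose second summand is orthogonal to \(x_{n+1}=0\), establishing the stated position.

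I expect the main obstacle to be the sign bookkeeping in the second paragraph: correctly relating reversion and conjugation through \(w^{*}=(-1)^{\deg w}\bar w\), pinning down \(\kappa=(-1)^{\deg d}\) and the induced parities \(\deg a\equiv\deg d\) and \(\deg b\equiv\deg c\equiv\deg d+1 \pmod 2\), and tracking the sign produced each time \(e_{n+1}\) is commuted past \(a,b,c,d\). Once these signs are fixed, the cancellations and the emergence of the single surviving term \(\delta r e_{n+1}\) are automatic, and the remaining steps are routine substitution.
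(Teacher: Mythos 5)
Your proposal is correct and takes essentially the same route as the paper's proof: compute the entries of \(M\cycle{}{}M^*\) directly, split \(l=x+re_{n+1}\), and use the anticommutation of \(e_{n+1}\) with elements of \(\Cliff{n}\) together with the Ahlfors conditions (\(cd^*=dc^*\), \(ab^*=ba^*\), \(ad^*-bc^*=\delta\)) to cancel the \(r\)-terms in the \(k\)- and \(m\)-components and reduce the one in the \(l\)-component to \(\delta r e_{n+1}\); your conjugation-plus-parity bookkeeping is exactly the paper's single identity \(e_{n+1}\bar{d}=d^*e_{n+1}\) written in different notation. Your only departures are cosmetic and sound: you deduce the incidence at \(M(0)=\frac{b\bar{d}}{\modulus{d}^2}\) and \(M(\infty)=\frac{a\bar{c}}{\modulus{c}^2}\) from M\"obius covariance where the paper just substitutes the points into the image cycle, and you correctly retain the \(\delta r e_{n+1}\) term in the special case \(x=\bar{c}d\), which the lemma's displayed formula drops even though its centre formula accounts for it.
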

\begin{proof}
  We note that \(e_{n+1} x=-x e_{n+1}\) for all
  \(x\in\Space{R}{n}\). Thus, for a product of  vectors \(d\in\Cliff {n}\)
  we have \(e_{n+1} \bar{d}=d^* e_{n+1}\). Then  
  \begin{displaymath}
    c e_{n+1}\bar{d}+d\bar{e}_{n+1}\bar{c}=(cd^*-dc^*)e_{n+1}=(cd^*-(cd^*)^*)e_{n+1}=0,
  \end{displaymath}
  due to the Ahlfors
  condition~\ref{it:ab-cd-ca-db-vectors}. Similarly, \(a
  e_{n+1}\bar{b}+ b\bar{e}_{n+1}\bar{a}=0\) and \(a
  e_{n+1}\bar{d}+b\bar{e}_{n+1}\bar{c}=(ad^*-bc^*) e_{n+1}=\delta e_{n+1}\). 

  The image \(M\cycle{}{}M^*\) of the cycle \(\cycle{}{}=(0,l,0)\) is
  \((c l\bar{d}+d\bar{l}\bar{c}, a l\bar{d}+b\bar{l}\bar{c}, a
  l\bar{b}+ b\bar{l}{a})\).  From the above calculations for
  \(l=x+re_{n+1}\) it becomes \((c x\bar{d}+d\bar{x}\bar{c}, a
  x\bar{d}+b\bar{x}\bar{c}+\delta r e_{n+1}, a x\bar{b}+
  b\bar{x}\bar{a})\). The rest of statement is verified by the substitution.
\end{proof}

Thus, we have exactly the same freedom to choose representing
horocycles as in Section~\ref{sec:cont-fract-cycl}: make two
consecutive horocycles either tangent or orthogonal.  To visualise
this, we may use the two-dimensional plane \(V\) passing the points of
contacts of two consecutive horocycles and orthogonal to
\(x_{n+1}=0\). It is natural to choose the connecting cycle (drawn in
blue on Fig.~\ref{fig:vari-arrang-three}) with the centre belonging to
\(V\), this eliminates excessive degrees of freedom. The corresponding
parameters are described in the second part of
Lem.~\ref{le:blue-multi}.
Then, the intersection of horocycles with \(V\)
are the same as on Fig.~\ref{fig:vari-arrang-three}. 

Thus, the continued fraction with the partial quotients
\(\frac{P_n\bar{Q}_n}{\modulus{Q_n}^2}\in\Space{R}{n}\) can be
represented by the chain of tangent/orthogonal horocycles. The
observation made at the end of Section~\ref{sec:cont-fract-cycl} on
computational advantage of orthogonal horocycles remains valid in
multidimensional situation as well.

As a further alternative we may shift the focus from horocycles to the
connecting cycle (drawn in blue on
Fig.~\ref{fig:vari-arrang-three}). The part of the space
\(\Space{R}{n}\) encloses inside the connecting cycle is the image
under the corresponding M\"obius transformation of the half-space of
\(\Space{R}{n}\) cut by the hyperplane \((0,l,0)\) from
Lem.~\ref{le:blue-multi}. Assume a sequence of connecting cycles
\(\cycle{}{j}\) satisfies the following two conditions, e.g. in
Seidel--Stern-type theorem~\cite{BeardonShort14a}*{Thm~4.1}:
\begin{enumerate}
\item for any \(j\), the cycle \(\cycle{}{j}\) is enclosed within the cycle
  \(\cycle{}{j-1}\);
\item the sequence of radii of \(\cycle{}{j}\) tends to zero.
\end{enumerate}
Under the above assumption the sequence of partial fractions
converges. Furthermore, if we use the connecting cycles in the third
arrangement, that is generated by the cycle  \((0,x+e_{n+1},0)\),
where \(\norm{x}=1\), \(x\in\Space{R}{n}\), then the above second condition
can be replaced by following 
\begin{enumerate}
\item[(\(2'\))] the sequence of \(x_{n+1}^{(j)}\) of \((n+1)\)-th
  coordinates of the centres of the connecting cycles \(\cycle{}{j}\)
  tends to zero.
\end{enumerate}
Thus, the sequence of connecting cycles is a useful tool to describe
a continued fraction even without a relation to horocycles.

Summing up, we started from multidimensional continued fractions
defined through the composition of M\"obius transformations in
Clifford algebras and associated to it the respective chain of
horocycles. This establishes the equivalence of two approaches proposed
in~\cites{Beardon03a} and \cite{BeardonShort14a} respectively.

%\section*{Acknowledgments}
%\label{sec:acknowledgments}

%\appendix
%\section{} 
%\label{se:appendix}

{\small
%\bibliography{abbrevmr,akisil,analyse,algebra,arare,aclifford,aphysics,acompute,ageometry,acombin}
\providecommand{\noopsort}[1]{} \providecommand{\printfirst}[2]{#1}
  \providecommand{\singleletter}[1]{#1} \providecommand{\switchargs}[2]{#2#1}
  \providecommand{\irm}{\textup{I}} \providecommand{\iirm}{\textup{II}}
  \providecommand{\vrm}{\textup{V}} \providecommand{\cprime}{'}
  \providecommand{\eprint}[2]{\texttt{#2}}
  \providecommand{\myeprint}[2]{\texttt{#2}}
  \providecommand{\arXiv}[1]{\myeprint{http://arXiv.org/abs/#1}{arXiv:#1}}
  \providecommand{\doi}[1]{\href{http://dx.doi.org/#1}{doi:
  #1}}\providecommand{\CPP}{\texttt{C++}}
  \providecommand{\NoWEB}{\texttt{noweb}}
  \providecommand{\MetaPost}{\texttt{Meta}\-\texttt{Post}}
  \providecommand{\GiNaC}{\textsf{GiNaC}}
  \providecommand{\pyGiNaC}{\textsf{pyGiNaC}}
  \providecommand{\Asymptote}{\texttt{Asymptote}}
% \bib, bibdiv, biblist are defined by the amsrefs package.
\begin{bibdiv}
\begin{biblist}

\bib{Ahlfors86}{article}{
      author={Ahlfors, L.V.},
       title={{M{\"o}bius} transformations in{ \Space{R}{n}} expressed through
  $2\times 2$ matrices of {Clifford} numbers},
        date={1986},
     journal={Complex Variables Theory Appl.},
      volume={5},
      number={2},
       pages={215\ndash 224},
}

\bib{Beardon03a}{article}{
      author={Beardon, A.~F.},
       title={Continued fractions, {M}\"obius transformations and {C}lifford
  algebras},
        date={2003},
        ISSN={0024-6093},
     journal={Bull. London Math. Soc.},
      volume={35},
      number={3},
       pages={302\ndash 308},
         url={http://dx.doi.org/10.1112/S0024609302001807},
      review={\MR{1960940 (2004a:30004)}},
}

\bib{Beardon04b}{article}{
      author={Beardon, A.~F.},
      author={Lorentzen, L.},
       title={Continued fractions and restrained sequences of {M}\"obius maps},
        date={2004},
        ISSN={0035-7596},
     journal={Rocky Mountain J. Math.},
      volume={34},
      number={2},
       pages={441\ndash 466},
      review={\MR{MR2072789 (2005e:30002)}},
}

\bib{Beardon01a}{article}{
      author={Beardon, Alan~F.},
       title={Continued fractions, discrete groups and complex dynamics},
        date={2001},
        ISSN={1617-9447},
     journal={Comput. Methods Funct. Theory},
      volume={1},
      number={2},
       pages={535\ndash 594},
         url={http://dx.doi.org/10.1007/BF03321006},
      review={\MR{1941142 (2003m:30010)}},
}

\bib{BeardonShort14a}{article}{
      author={Beardon, Alan~F.},
      author={Short, Ian},
       title={A geometric representation of continued fractions},
        date={2014},
        ISSN={0002-9890},
     journal={Amer. Math. Monthly},
      volume={121},
      number={5},
       pages={391\ndash 402},
         url={http://dx.doi.org/10.4169/amer.math.monthly.121.05.391},
      review={\MR{3193722}},
}

\bib{BorweinPoortenShallitZudilin14a}{book}{
      author={Borwein, Jonathan},
      author={van~der Poorten, Alf},
      author={Shallit, Jeffrey},
      author={Zudilin, Wadim},
       title={Neverending fractions. {An} introduction to continued fractions},
      series={Australian Mathematical Society Lecture Series},
   publisher={Cambridge University Press},
     address={Cambridge},
        date={2014},
      volume={23},
        ISBN={978-0-521-18649-0},
}

\bib{Cnops02a}{book}{
      author={Cnops, Jan},
       title={An introduction to {D}irac operators on manifolds},
      series={Progress in Mathematical Physics},
   publisher={Birkh\"auser Boston Inc.},
     address={Boston, MA},
        date={2002},
      volume={24},
        ISBN={0-8176-4298-6},
      review={\MR{1 917 405}},
}

\bib{FillmoreSpringer90a}{article}{
      author={Fillmore, Jay~P.},
      author={Springer, A.},
       title={M\"obius groups over general fields using {C}lifford algebras
  associated with spheres},
        date={1990},
        ISSN={0020-7748},
     journal={Internat. J. Theoret. Phys.},
      volume={29},
      number={3},
       pages={225\ndash 246},
         url={http://dx.doi.org/10.1007/BF00673627},
      review={\MR{1049005 (92a:22016)}},
}

\bib{Karpenkov2013a}{book}{
      author={Karpenkov, Oleg},
       title={Geometry of continued fractions},
      series={Algorithms and Computation in Mathematics},
   publisher={Springer},
     address={Heidelberg},
        date={2013},
      volume={26},
        ISBN={978-3-642-39367-9; 978-3-642-39368-6},
         url={http://dx.doi.org/10.1007/978-3-642-39368-6},
      review={\MR{3099298}},
}

\bib{Khrushchev08a}{book}{
      author={Khrushchev, Sergey},
       title={Orthogonal polynomials and continued fractions: from {Euler's}
  point of view},
      series={Encyclopedia of Mathematics and its Applications},
   publisher={Cambridge University Press},
     address={Cambridge},
        date={2008},
      volume={122},
        ISBN={978-0-521-85419-1},
         url={http://dx.doi.org/10.1017/CBO9780511721403},
      review={\MR{2442472 (2010m:42001)}},
}

\bib{Kirillov06}{book}{
      author={Kirillov, A.~A.},
       title={A tale of two fractals},
   publisher={Springer, New York},
        date={2013},
        ISBN={978-0-8176-8381-8; 978-0-8176-8382-5},
         url={http://dx.doi.org/10.1007/978-0-8176-8382-5},
        note={Draft:
  \url{http://www.math.upenn.edu/~kirillov/MATH480-F07/tf.pdf}},
      review={\MR{3060066}},
}

\bib{Kisil06a}{article}{
      author={Kisil, Vladimir~V.},
       title={Erlangen program at large--0: Starting with the group {${\rm
  SL}\sb 2({\bf R})$}},
        date={2007},
        ISSN={0002-9920},
     journal={Notices Amer. Math. Soc.},
      volume={54},
      number={11},
       pages={1458\ndash 1465},
        note={\arXiv{math/0607387},
  \href{http://www.ams.org/notices/200711/tx071101458p.pdf}{On-line}.
  \Zbl{1137.22006}},
      review={\MR{MR2361159}},
}

\bib{Kisil05b}{article}{
      author={Kisil, Vladimir~V.},
       title={{S}chwerdtfeger--{F}illmore-{S}pringer-{C}nops construction
  implemented in \texttt{GiNaC}},
        date={2007},
        ISSN={0188-7009},
     journal={Adv. Appl. Clifford Algebr.},
      volume={17},
      number={1},
       pages={59\ndash 70},
        note={\href{http://dx.doi.org/10.1007/s00006-006-0017-4}{On-line}.
  Updated full text, source files, and live ISO image: \arXiv{cs.MS/0512073}.
  Project page \url{http://moebinv.sourceforge.net/}. \Zbl{05134765}},
      review={\MR{MR2303056}},
}

\bib{Kisil12a}{book}{
      author={Kisil, Vladimir~V.},
       title={Geometry of {M}\"obius transformations: {E}lliptic, parabolic and
  hyperbolic actions of {$\mathrm{SL}_2(\mathbf{R})$}},
   publisher={Imperial College Press},
     address={London},
        date={2012},
        note={Includes a live DVD. \Zbl{1254.30001}},
}

\bib{MageeOhWinter14a}{article}{
      author={{Magee}, M.},
      author={{Oh}, H.},
      author={{Winter}, D.},
       title={Expanding maps and continued fractions},
        date={2014},
         url={http://arxiv.org/abs/1412.4284},
        note={\arXiv{1412.4284}},
}

\bib{PaydonWall42a}{article}{
      author={Paydon, J.~Findlay},
      author={Wall, H.~S.},
       title={The continued fraction as a sequence of linear transformations},
        date={1942},
        ISSN={0012-7094},
     journal={Duke Math. J.},
      volume={9},
       pages={360\ndash 372},
      review={\MR{0006386 (3,297d)}},
}

\bib{PiranianThron57a}{article}{
      author={Piranian, G.},
      author={Thron, W.~J.},
       title={Convergence properties of sequences of linear fractional
  transformations},
        date={1957},
        ISSN={0026-2285},
     journal={Michigan Math. J.},
      volume={4},
       pages={129\ndash 135},
      review={\MR{0093578 (20 \#102)}},
}

\bib{Schwerdtfeger45a}{article}{
      author={Schwerdtfeger, H.},
       title={{Moebius} transformations and continued fractions},
        date={1946},
        ISSN={0002-9904},
     journal={Bull. Amer. Math. Soc.},
      volume={52},
       pages={307\ndash 309},
      review={\MR{0015532 (7,434c)}},
}

\bib{Schwerdtfeger79a}{book}{
      author={Schwerdtfeger, Hans},
       title={Geometry of complex numbers: Circle geometry, {Moebius}
  transformation, non-{Euclidean} geometry},
      series={Dover Books on Advanced Mathematics},
   publisher={Dover Publications Inc.},
     address={New York},
        date={1979},
        ISBN={0-486-63830-8},
        note={A corrected reprinting of the 1962 edition},
      review={\MR{620163 (82g:51032)}},
}

\bib{Yaglom79}{book}{
      author={Yaglom, I.~M.},
       title={A simple non-{E}uclidean geometry and its physical basis},
      series={Heidelberg Science Library},
   publisher={Springer-Verlag},
     address={New York},
        date={1979},
        ISBN={0-387-90332-1},
        note={Translated from the Russian by Abe Shenitzer, with the editorial
  assistance of Basil Gordon},
      review={\MR{MR520230 (80c:51007)}},
}

\end{biblist}
\end{bibdiv}
}
\end{document}